\newtheorem{theorem}{Theorem}[section]
\newtheorem{proposition}[theorem]{Proposition}
\newtheorem{lemma}[theorem]{Lemma}
\newtheorem{remark}[theorem]{Remark}
\renewcommand{\(}{\left(}
\renewcommand{\)}{\right)}
\renewcommand{\[}{\left[}
\renewcommand{\]}{\right]}
 \newcommand{\rr}{ \mathbb{R}}
\begin{document}
\title[Supercritical problems on manifolds]{Supercritical problems on manifolds}

\author{Angela Pistoia}
\address[Angela Pistoia] {Dipartimento SBAI, Universit\`{a} di Roma ``La Sapienza", via Antonio Scarpa 16, 00161 Roma, Italy}
\email{pistoia@sbai.uniroma1.it}

\author{Giusi Vaira}
\address[Giusi Vaira] {Dipartimento di Matematica "G.Castelnuovo", Universit\`{a} di Roma ``La Sapienza", P.le Aldo Moro 5, 00185 Roma, Italy}
\email{giusi.vaira@gmail.com}

\maketitle
\date{}
\begin{abstract}
Let $(M,g)$ be a $m$-dimensional compact Riemannian manifold  without boundary.   Assume $\kappa\in C^2(M)$ is such that  $-\Delta_g+\kappa$ is coercive.
We prove the existence of a solution to the supercritical problems
$$ -\Delta_gu+\kappa u= u^p,\ u>0\quad\hbox{in}\ (M,g)\quad\hbox{and}\quad-\Delta_gu+\kappa u=\lambda e^u\quad\hbox{in}\ (M,g)
$$
which concentrate s along a $(m-1)-$dimensional submanifold of $M$
as  $p\to\infty$ and $\lambda\to0$, respectively, under suitable symmetry assumptions on the manifold $M$.

 {\bf Keywords}: supercritical problem, concentration along high dimensional manifold

{\bf  AMS subject classification}: 35B10, 35B33, 35J08, 58J05

\end{abstract}

\section{Introduction}
Let $(M,g)$ be a $m$-dimensional compact Riemannian manifold   without boundary.
We are interested in studying the following problems
\begin{equation}\label{le}
-\Delta_gu+\kappa u= u^p,\ u>0\quad\hbox{in}\ (M,g)\end{equation}
when the exponent $p$ is large enough and
 \begin{equation}\label{mf}
-\Delta_gu+\kappa u=\lambda e^u\quad\hbox{in}\ (M,g)\end{equation}
when the real parameter $\lambda$ is positive and small enough.
 Here $\kappa\in C^2(M)$ is such that the operator $-\Delta_g  +\kappa  $ is coercive.

\medskip

Let $2^*_m=\frac{2m}{m-2}$ if $m\ge 3$ be the critical exponent for the embeddings of the Riemannian Sobolev space $H^1_g\(M\)$ into Lebesgue spaces.  Set $2^*_2=+\infty.$  Due to the compactness of the embedding $H^1_g\(M\)\hookrightarrow L^{p+1}_g(M)$, the equation \eqref{le} in the subcritical case, i.e. $1<p<2^*_m-1,$   has  always a solution obtained by minimizing the quotient
$$\min\limits_{u\in H^1_g\(M\)\atop u\not=0}{\int\limits_M\(|\nabla_g u|^2+\kappa(x)u^2\)d\mu_g\over\(\int\limits_M |u|^{p+1}d\mu_g\)^{2\over p+1}}.$$
In the critical case, i.e. $p=2^*_m-1$ with $m\ge 3,$ existence of solutions is related to the position of the potential $\kappa $ with respect to the geometric potential $k_g:= {m-2\over 4(m-1)} R_g$, where $R_g$ is the scalar curvature of the manifold. If $\kappa\equiv \kappa_g$, then problem \eqref{le}  is referred to as  the   Yamabe equation and it has always a solution (see Aubin \cite{Aub}, Schoen \cite{Sch1}, Trudinger \cite{Tru}, and Yamabe \cite{Yam} for early references on the subject). When $\kappa< \kappa_g$ somewhere in $M$, existence of a solution is guaranteed by a minimization argument (see for example Aubin \cite{Aub}). The situation turns out to be more complicate when  $\kappa \ge \kappa_g$ because
blow-up phenomena can occur as pointed-out by Druet in \cite{Dru1,Dru2}. Some existence results  have been recently obtained   by Esposito-Pistoia-Vetois \cite{EPV} in a perturbative setting when the potential $\kappa$ is close to the geometrical one.

\noindent
In the supercritical regime, i.e. $p>2^*_m-1$, existence of   solutions is a delicate issue. As far as we know the only existence result has been obtained by
 Micheletti-Pistoia-Vetois  in \cite{MPV} when the potential $\kappa$ is far away from the geometric potential and the exponent $p$ is close to  the critical one.

\medskip
 Up to our knowledge, problem \eqref{mf} has  been   studied in an Euclidean setting, namely on a bounded domain of $\rr^m$ with Neumann boundary conditions. The $2-$dimensional case has been considered by Del Pino and Wei  in \cite{dw}, where the authors found solutions blowing-up at one or more points inside   or on the boundary of the domain as $\lambda\to0.$   The higher dimensional case  has been only  treated when the domain is a ball   
 and different type of radial solutions have been found by     Biler in \cite{b}  and Pistoia and Vaira in \cite{pv}.

\bigskip

In this paper, we prove the existence of a solution to \eqref{le} and \eqref{mf} which concentrates along a $(m-1)-$dimensional submanifold of $M$
as $p\to\infty$ and $\lambda\to0,$ respectively, when $(M,g)$ is a warped product manifold.

We recall the notion of warped product introduced by Bishop and O'Neill in \cite{BO}.
Let $(B,g_B)$ and $(N,g_N)$ be two riemannian manifolds of dimensions $b$ and $n ,$ respectively. Let   $f\in C^2(B),$ $f> 0$ be a differentiable function.  Consider the product (differentiable) manifold $B\times N$ with its projections
$\pi: B\times N\to B$ and $\eta: B\times N\to N.$ The warped product $M = B\times _f N$ is the manifold
$B\times N$ furnished with the riemannian structure such that
$$\|X\|^2=\|\pi_*(X)\|^2+f^2(\pi x)\|\eta_*(X)\|^2$$
for every tangent vector $X\in T_xM,$ whose associated metric is $g=g_B+f^2g_N.$ $f$ is called {\em warping function}.
For example, every surface of revolution (not crossing the axis of revolution) is
isometric to a warped product, with $B$ the generating curve, $N=S^1$ and $f(x)$ the distance from $x \in B$ to the axis of revolution.

It is not difficult to check that if $u\in C^2(B\times _f N)$   then
\begin{equation}\label{equ2}\Delta _g u=\Delta _{g_B} u +{n \over f}g_B\(\nabla^B f, \nabla ^B u\)+{1\over f^2}\Delta _{g_N} u.\end{equation}
Assume $\kappa$ is invariant with respect to $N,$ i.e. $\kappa(x,y)=\kappa(x)$ for any $(x,y)\in B\times N.$
If we look for solutions to \eqref{le} or \eqref{mf} which are invariant with respect to $N,$ i.e. $u(x,y)=v(x)$ then by \eqref{equ2}
we immediately deduce that $u$ solves \eqref{mf} or \eqref{le} if and only if $v$ solves
\begin{equation}\label{equ3}-\Delta _{g_B} v -{n \over f}g_B\(\nabla^B f, \nabla ^B v\)+\kappa v=v^p\ \hbox{or}\ \lambda e^v\quad\hbox{in}\ B.
\end{equation}

It is clear that if $v$ is a solution to problem \eqref{equ3} which concentrates at a point $x_0\in B$ then $u(x,y)=v(x)$ is a solution to problems \eqref{mf} or \eqref{le}
which concentrates along the fiber $\{x_0\}\times N$, which is a $n -$dimensional submanifold of $M.$
It is important to notice the the fiber $\{x_0\}\times N$ is   totally geodesic in $M$ (and in particular a minimal submanifold of $M$) if $x_0$ is a critical point of the warping function $f.$

\medskip
Here, we will consider the particular case when  $M=S^1 \times_f N,$ i.e.
  $B=S^1.$ In this case \eqref{equ3} turns out to be equivalent to the one dimensional periodic  boundary value problem
\begin{equation}\label{equ4}
\left\{
\begin{aligned}
&-  v'' - n  {f'(r)\over f(r)}v'+\kappa(r)v= v^p\ \hbox{or}\ \lambda e^v\quad \hbox{if}\ r\in(0,1)\\
&v(0)=v(1),\quad v'(0)=v'(1),
\end{aligned}\right.
\end{equation}
where $f\in   \mathfrak P^+$ and $\kappa\in \mathfrak P_c$ Here
\begin{eqnarray}\label{perio}
& \mathfrak P:=\left\{\theta\in C^2([0,1])\ :\ \theta(0)=\theta(1),\ \theta'(0)=\theta'(1),\ \theta^{''}(0)=\theta^{''}(1) \right\},\\ \label{perio2}
 & \mathfrak P^+:= \{\theta\in \mathfrak P\ :\ \min  _{[0,1]}\theta>0  \},\\ \label{perio3} 
& \mathfrak P_c:= \{\theta\in \mathfrak P\ :\ \hbox{the operator  $-v^{''}- n{f'\over f}v '+\kappa v$ is coercive} \}.\end{eqnarray}

It is clear that if $v$ is a solution to problem \eqref{equ4} which concentrates at a point $r_0\in (0,1)$ then $u(r,y)=v(r)$ is a solution to problems \eqref{le} or \eqref{mf}
which concentrates along the fiber $\{r_0\}\times N$, which is a $(m-1) -$dimensional submanifold of $M.$

In order to state our main results, it is useful to introduce the Green's function $G(r,s)$ of the operator
\begin{equation}\label{green}
 -v^{''}- n{f'(r)\over f(r)}v '+\kappa(r)v,\quad r\in(0,1)\end{equation}
with  periodic boundary conditions
$v(0)=v(1),\    v'(0)=v'(1).$
More precisely, the function
\begin{equation}\label{g1}
v(r):=\int\limits_0^1G(r,s)h(s)ds
\end{equation}
is the unique solution of the problem
$$\left\{
\begin{aligned}
&-v^{''}- n{f'(r)\over f(r)}v '+\kappa(r)v=h(r),\  r\in(0,1)\\
&v(0)=v(1),\quad   v'(0)=v'(1),
\end{aligned}\right.$$
for any smooth function $h.$

The Green's function can be decomposed as the sum of its singular part $\Gamma$ and its regular part $H$, namely
\begin{equation}\label{deco}
G(r,s)=\Gamma(r,s)+H(r,s),\end{equation}
where the function $\Gamma$ is defined as
\begin{equation}\label{gamma}
\Gamma(r,s):=\left\{
\begin{aligned}
&0 \ &  \hbox{if}\ 0\le r<s\le1,\\
&-f^n(s) \int\limits^r_s{1\over f^n(t)}dt\ & \hbox{if}\ 0\le s<r\le1,\\
\end{aligned}\right.\end{equation}
which solves
$$ -\partial_{rr}\Gamma(r,s)- n{f'(r)\over f(r)}\partial_r\Gamma
(r,s) =\delta_s(r),\  r\in(0,1)
 $$
in the sense of distributions. (Here $\delta_s$ is the Dirac function centered at $s$).
The regular part $H$  solves   the  periodic boundary value problem
\begin{equation}
\label{hn}\left\{
\begin{aligned}
&-\partial_{rr} H (r,s)- n{f'(r)\over f(r)}\partial_{r } H (r,s)+\kappa(r)H (r,s)=-\kappa(r)\Gamma (r,s),\  r,s\in[0,1],\\
&H  (0,s)=H  (1,s)-f^n(s) \int\limits^1_s{1\over f^n(t)}dt,\ s\in[0,1]\\
&\partial_{r } H  (0,s)=\partial_{r } H  (1,s)-{f^n(s)\over f^n(1)},\ s\in[0,1]. \\
\end{aligned}\right.\end{equation}
We point out that the Green's function, its regular part and its singular part depend on the warping function $f$ and the potential $\kappa.$ When it is necessary, we will focus the dependance of $H$ from  $f$ and $\kappa$ by writing $H_{f,\kappa}.$ In general, for sake of simplicity, we will omit it.

 Let us introduce the function
 \begin{equation}\label{A}
 \mathcal V_{f,\kappa}(r):={H_{f,\kappa}(r,r)\over f^n(r)},\quad r\in[0,1].
 \end{equation}
   Assume

   \begin{equation}\label{ipo}
   \hbox{\em there exists a non degenerate critical point $r_0\in(0,1)$ of the function
$\mathcal V _{f,\kappa} .$}
\end{equation}

In   Proposition  \ref{cor1} it will be shown that   the function $\mathcal V_{f,\kappa}$  has always at least a critical point, while in Section \ref{generi} we will prove that the non degeneracy condition,  is satisfied  for {\em  most}   warping functions $f$'s and for {\em most}  potential $\kappa$'s.

More precisely,  let $ \mathfrak P$ be the Banach space introduced in \eqref{perio} equipped with the norm
$\|\theta\| :=\sup\limits_{r\in[0,1]} \(|\theta(r)|+|\theta'(r)|+|\theta^{''}(r)|\).$

Let $\theta_0\in\mathfrak P$   and set
  $\mathfrak{B} (\theta_0,\rho):=\left\{ \theta\in  \mathfrak P   \ :\ \|\theta-\theta_0\| \le\rho\right\}.$
  We will prove the following generic result.
\begin{theorem}\label{main}
\begin{itemize}
\item[(i)] Let $\kappa\in  \mathfrak P_c$ be fixed.
For any $f_0\in  \mathfrak P^+$, the set
 $\mathfrak{A}:=\left\{f\in \mathfrak{B}(f_0,\rho)\ :\ \right.$  all the critical points of the  function
 $\mathcal V_{f,\kappa}$ are non degenerate $\left.\right\}$
 is a  dense subset of $\mathfrak{B}(f_0,\rho),$ provided $\rho$ is small enough.
 \item[(ii)] Let $f\in  \mathfrak P^+$ be fixed.
For any $\kappa_0\in  \mathfrak P_c$, the set
 $\mathfrak{A}:=\left\{\kappa\in \mathfrak{B}(\kappa_0,\rho)\ :\ \right.$  all the critical points of the  function
 $\mathcal V_{f,\kappa}$ are non degenerate $\left.\right\}$
 is a  dense  subset of $\mathfrak{B}(\kappa_0,\rho),$ provided $\rho$ is small enough.
 \end{itemize}
 \end{theorem}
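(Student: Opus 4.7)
The strategy is a Sard--Smale transversality argument applied to the natural evaluation map; I describe part (i), as part (ii) follows by the same scheme with $\kappa$ playing the role of $f$. Choose $\rho>0$ small enough that $\mathfrak B(f_0,\rho)\subset\mathfrak P^+$ (possible since $\mathfrak P^+$ is open in $\mathfrak P$), and define
$$
\Phi:S^1\times\mathfrak B(f_0,\rho)\to\mathbb R,\qquad \Phi(r,f):=\mathcal V'_{f,\kappa}(r),
$$
where $S^1=[0,1]/\!\sim$. The zero set $Z:=\Phi^{-1}(0)$ consists exactly of the pairs $(r,f)$ for which $r$ is a critical point of $\mathcal V_{f,\kappa}$. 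The plan is to show that $0$ is a regular value of $\Phi$, so that $Z$ becomes a $C^1$ Banach submanifold of codimension one and the projection $\pi:Z\to\mathfrak B(f_0,\rho)$ becomes a $C^1$ Fredholm map of index zero. The Sard--Smale theorem then yields a residual, hence dense, set of regular values of $\pi$. A direct tangent-space computation identifies these with the $f$ at which every critical point of $\mathcal V_{f,\kappa}$ is non-degenerate, i.e.\ with the set $\mathfrak A$.

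That $\Phi$ is of class $C^1$ reduces to smooth dependence of $H_{f,\kappa}$ on $f$. Differentiating the boundary value problem \eqref{hn} in a direction $\phi\in\mathfrak P$, the derivative $\dot H:=D_fH_{f,\kappa}[\phi]$ solves a coercive linear periodic ODE whose source is linear in $(\phi,\phi')$ with coefficients given by the known data $f,f',H,H',\Gamma$; the explicit form \eqref{gamma} allows the singular part $\Gamma$ to be differentiated pointwise, so the inhomogeneous boundary conditions for $\dot H$ are also explicit. Standard one-dimensional elliptic regularity then gives $C^2$-control of $\dot H$ by $\|\phi\|$, which suffices.

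The main obstacle is the transversality step: showing that at every $(r_0,f)\in Z$ the differential $D\Phi(r_0,f)$ is nonzero (equivalently, surjective, since the target is one-dimensional). If $\mathcal V''_{f,\kappa}(r_0)\neq 0$ this is automatic, so suppose $r_0$ is a degenerate critical point and one must exhibit a single $\phi\in\mathfrak P$ with $D_f\Phi(r_0,f)[\phi]\neq 0$. Using the formula from the previous paragraph,
$$
D_f\mathcal V_{f,\kappa}(r)[\phi]=\frac{\dot H(r,r)}{f^n(r)}-n\,\frac{\phi(r)\,H(r,r)}{f^{n+1}(r)},
$$
so $\phi\mapsto D_f\Phi(r_0,f)[\phi]$ is a bounded linear functional $\ell_{r_0,f}$ on $\mathfrak P$. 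Assuming for contradiction that $\ell_{r_0,f}\equiv 0$, one tests first with $\phi$ supported in an arbitrarily small neighbourhood of $r_0$ and with prescribed $C^2$-jet there, forcing vanishing of explicit combinations of $\partial_r^iH(r_0,r_0)$ and $f^{(j)}(r_0)$; testing next with $\phi$ supported on an interval disjoint from a neighbourhood of $r_0$ forces a $C^2$-kernel built from $G(r_0,\cdot)$ and its $r$-derivatives at $r_0$ to vanish on that interval. Unique continuation for the equation satisfied by $G(r_0,\cdot)$ off the diagonal, together with the explicit formula \eqref{gamma}, rules this out and completes the transversality step. For part (ii) the same scheme is strictly easier, since $\kappa$ enters \eqref{hn} linearly, the corresponding functional on $\mathfrak P$ reduces to integration against a single explicit kernel, and its non-triviality is immediate.
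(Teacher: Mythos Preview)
Your overall architecture matches the paper's: both apply an abstract transversality/Sard--Smale theorem to the map sending $(r,f)$ (resp.\ $(r,\kappa)$) to the derivative of $\mathcal V_{f,\kappa}$ at $r$, reduce the regular-value condition to surjectivity of the partial differential in $f$ (resp.\ $\kappa$), and observe that the Fredholm and $\sigma$-properness hypotheses are automatic because the $r$-variable is one-dimensional. The paper works with the equivalent map $F(r,f)=\partial_t H_{f,\kappa}(t,r)|_{t=r}-\tfrac12$ (cf.\ Proposition~\ref{cor1}(ii)), which strips away the factor $f^{-n}$ and simplifies the algebra, but this is a cosmetic difference.

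The substantive divergence is in how the transversality (surjectivity of $D_f F$) is verified, and here your argument has a gap. You argue by contradiction: assume the linear functional $\phi\mapsto D_f\Phi(r_0,f)[\phi]$ vanishes identically, test with $\phi$ supported near and away from $r_0$, and invoke ``unique continuation for $G(r_0,\cdot)$''. But if one actually writes $w_\phi=D_fH(\cdot,\bar r)[\phi]$ via the Green's function and differentiates at $\bar r$, testing with $\phi$ supported in an interval $(a,b)$ away from $\bar r$ yields (after integration by parts) the vanishing of a kernel of the form $\bar f^{-1}(s)\,\partial_s\!\big(\partial_s H(s,\bar r)\,\partial_r G(\bar r,s)\big)$ on $(a,b)$, i.e.\ that the \emph{product} $\partial_s H(s,\bar r)\,\partial_r G(\bar r,s)$ is constant there. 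This is not the statement ``$G(r_0,\cdot)\equiv 0$ on an interval'', and unique continuation for a second-order ODE does not immediately rule it out; turning this into a contradiction requires a separate argument that you have not supplied. The ``easier'' claim for part~(ii) is likewise not justified: the kernel one obtains is $G(s,\bar r)\,\partial_r G(\bar r,s)$, again a product, not $G$ alone.

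The paper avoids this difficulty entirely by a direct, constructive computation. It takes $\theta_\epsilon(t)=\chi\!\big(\tfrac{t-\bar r}{\epsilon}\big)\bar f(t)$ with $\chi$ a fixed even bump, splits $w_{\theta_\epsilon}=\tilde w_\epsilon+\hat w_\epsilon$ according to the periodic and non-periodic parts of the data, and computes each piece explicitly: $\tilde w_\epsilon'(\bar r)\to n/2$ using the jump of $\partial_r G(\bar r,\cdot)$ across the diagonal together with $\partial_r H(\bar r,\bar r)=1/2$, while $\hat w_\epsilon\to nH(\cdot,\bar r)$ in $C^1$ so $\hat w_\epsilon'(\bar r)\to n/2$ as well. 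Hence $w_{\theta_\epsilon}'(\bar r)\to n\neq 0$, giving surjectivity without any contradiction argument. For part~(ii) the analogous choice $\theta_\epsilon(t)=\epsilon^{-1}\zeta\!\big(\tfrac{t-\bar r}{\epsilon}\big)$ yields $z'_\epsilon(\bar r)\to -\tfrac12 H(\bar r,\bar r)\int_0^1\zeta\neq 0$ by a single Green's-function computation. If you want to keep your indirect approach, you will need to replace the unique-continuation step with a genuine argument excluding constancy of the product above; otherwise, the paper's bump-function computation is the cleanest route.
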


Finally, we have all the ingredients to state our main results.
We will assume that
\begin{itemize}
\item $M=S^1\times_f N$ is a warped product manifold and  $f\in \mathfrak P^+$ (see \eqref{perio2}), 
\item $\kappa \in C^2(M)$ is invariant with respect to $N,$ i.e. $\kappa(r,y)=\kappa(r)$ for any $y\in N,$
and  $\kappa\in \mathfrak P_c$ (see \eqref{perio3}),
\item $r_0\in(0,1)$ is a non degenerate critical point of the function
$ \mathcal V_{f,\kappa}$ (see \eqref{A} and \eqref{ipo}).
\end{itemize}

\begin{theorem}\label{mainle}
There exists $p_0>0$ such that for any $p>p_0 $ problem
\begin{equation}\label{pp}
\left\{\begin{aligned}
&-v^{''}- n{f'(r)\over f(r)}v '+\kappa(r)v=v^p\quad r\in(0,1),\\
&v(r)>0\quad  r\in(0,1),\\
&v(0)=v(1),\quad v '(0)=v'(1).
\end{aligned}\right.
\end{equation}
 has a solution
$v_p$ such that
$$ v_p(r)\to{ G (r,r_0)\over H (r_0,r_0)}\quad\hbox{uniformly in $[0,1]$ as $p\to\infty$} .$$

In particular,   for ''most'' warping functions $f$'s  and for "most" potentials $\kappa$'s,
problem \eqref{le} has a solution invariant with respect to $N,$ i.e.
$u_p(r,y)=v_p(r) $  for any $y\in N,$ which concentrates along the  $(m-1)-$dimensional submanifold $\{r_0\}\times N$ of $M$ as $p\to\infty.$
\end{theorem}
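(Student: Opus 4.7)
The argument is a finite-dimensional Lyapunov--Schmidt reduction, in the spirit of the Ren--Wei construction for large-exponent problems in $\rr^2$ and its subsequent refinements. The ``outer'' profile is $V_{\xi}(r):=G(r,\xi)/H(\xi,\xi)$; it is periodic, satisfies $L V_{\xi}=0$ off $\xi$, $V_{\xi}(\xi)=1$ (because $\Gamma(\xi,\xi)=0$), and $V_{\xi}<1$ elsewhere by the strong maximum principle for the coercive operator $L:=-\partial_{rr}-n\tfrac{f'}{f}\partial_r+\kappa$. Its derivative has a jump of $-1/H(\xi,\xi)$ across $\xi$, inherited from $\Gamma$. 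On the ``inner'' scale $\varepsilon_p=1/\sqrt p$ the solution is modelled on a bounded solution of the one-dimensional Liouville equation $-U''=e^U$ on $\rr$; the one-parameter family of entire ground states $U_c(z)=\log\bigl(2c^{2}/\cosh^{2}(cz)\bigr)$ provides the building block, the free parameter $c$ being fixed by matching the total $L^{1}$-mass of the spike with $1/H(r_{0},r_{0})$.

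\textbf{Step 1 (ansatz).} For each $\xi$ in a small neighbourhood of $r_0$ I would set
\begin{equation*}
W_{p,\xi}(r):=V_{\xi}(r)+\frac{1}{p}\Psi_{p,\xi}(r),
\end{equation*}
where $\Psi_{p,\xi}$ is a smooth spike supported near $\xi$, obtained by gluing the rescaled profile $U\bigl((r-\xi)/\varepsilon_p\bigr)$ to $0$ with a cutoff and further modifying it to absorb the corner of $V_\xi$ at $\xi$. The scale $\varepsilon_p=p^{-1/2}$ is forced by the requirement that $-\partial_{rr}$ balance $v^p$ when $v\approx 1+U/p$, since $(1+U/p)^{p}\to e^{U}$. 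One then checks that $L W_{p,\xi}-W_{p,\xi}^{p}$ is small in an appropriate weighted $L^{\infty}$-norm.

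\textbf{Step 2 (linear theory and fixed point).} Linearising at $W_{p,\xi}$ gives
\begin{equation*}
\mathcal L_{p,\xi}\phi:=-\phi''-n\tfrac{f'}{f}\phi'+\kappa\,\phi-pW_{p,\xi}^{p-1}\phi.
\end{equation*}
In the rescaled variable $z=(r-\xi)\sqrt p$, $\mathcal L_{p,\xi}$ approaches $-\partial_{zz}-e^{U(z)}$, whose only bounded kernel is spanned by $U'$. This translates into an approximate one-dimensional kernel $Z_{p,\xi}:=\partial_\xi W_{p,\xi}$ for the full problem, and standard perturbation arguments yield uniform invertibility of $\mathcal L_{p,\xi}$ on the $L^{\infty}$-orthogonal complement of $Z_{p,\xi}$, with the operator norm of the inverse bounded by a fixed power of $p$. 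A contraction argument then produces, for every $\xi$ close to $r_0$, a small remainder $\phi_{p,\xi}$ and a Lagrange multiplier $c_{p,\xi}\in\rr$ with
\begin{equation*}
L\bigl(W_{p,\xi}+\phi_{p,\xi}\bigr)-\bigl(W_{p,\xi}+\phi_{p,\xi}\bigr)^{p}=c_{p,\xi}Z_{p,\xi}.
\end{equation*}

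\textbf{Step 3 (reduction and main obstacle).} The true solution is recovered by selecting $\xi=\xi_p$ so that $c_{p,\xi_p}=0$. Testing the remainder equation against $Z_{p,\xi}$ and carefully expanding every term, one expects
\begin{equation*}
c_{p,\xi}=\frac{C_{0}}{p^{\alpha}}\,\mathcal V_{f,\kappa}'(\xi)+o\bigl(p^{-\alpha}\bigr),\qquad C_{0}\neq 0,\ \alpha>0,
\end{equation*}
uniformly for $\xi$ near $r_0$; the non-degeneracy of $r_0$ as a critical point of $\mathcal V_{f,\kappa}$ then permits the implicit function theorem to produce $\xi_p\to r_0$ solving $c_{p,\xi_p}=0$, and the desired $v_p=W_{p,\xi_p}+\phi_{p,\xi_p}$ is positive (since $W_{p,\xi_p}>0$ and $\|\phi_{p,\xi_p}\|_{\infty}=o(1)$) and converges uniformly to $V_{r_0}$. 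The hardest point is the identification of the leading coefficient in $c_{p,\xi}$ with $\mathcal V_{f,\kappa}'(\xi)=\bigl(H(\xi,\xi)/f^{n}(\xi)\bigr)'$: this uses the boundary/jump data for $H$ given in \eqref{hn}, an integration by parts against the drift $n f'/f$ (precisely what produces the $1/f^{n}$-weight in $\mathcal V_{f,\kappa}$, since $L$ is self-adjoint with respect to the weighted measure $f^{n}(r)\,dr$ inherited from the warped product), and a cancellation of all spurious $O(1)$ boundary terms arising from the corner of $V_\xi$. It is this computation that retroactively fixes the precise normalization of the spike $\Psi_{p,\xi}$ in Step~1 and isolates the role of the assumption \eqref{ipo}.
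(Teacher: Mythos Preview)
Your outline is recognisably in the right family of arguments, but the scaling you propose contains a genuine inconsistency that would make Step~1 fail.

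You take the outer profile $V_\xi=G(\cdot,\xi)/H(\xi,\xi)$ and add a spike of width $\varepsilon_p=p^{-1/2}$ modelled on a \emph{fixed} Liouville solution $U_c$. But $V_\xi$ has a corner at $\xi$ with slope jump $-1/H(\xi,\xi)$; hence on the inner region $|r-\xi|\lesssim p^{-1/2}$ one has $V_\xi(r)-1\sim p^{-1/2}$, so $p(V_\xi-1)\sim\sqrt p\to\infty$ and the key approximation $(V_\xi+\tfrac1p\Psi)^p\approx e^{\Psi}$ breaks down. Equivalently, your own mass-matching condition $\varepsilon_p\!\int e^{U_c}=1/H(r_0,r_0)$ gives $4c\,p^{-1/2}=1/H$, i.e.\ $c=\sqrt p/(4H)\to\infty$: the parameter $c$ is \emph{not} fixed, the profile $U_c$ is not bounded uniformly in $p$, and the true inner width is $\varepsilon_p/c\sim 1/p$, not $1/\sqrt p$. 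With $\varepsilon_p=p^{-1/2}$ and bounded $U$, the error $L W_{p,\xi}-W_{p,\xi}^{\,p}$ is $O(1)$ on the annulus $|r-\xi|\sim p^{-1/2}$, and no contraction is possible.

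The paper (following Grossi \cite{G1}) sidesteps this by taking as leading term the \emph{projection} $\rho\,PU_{\epsilon,s_\epsilon}$ defined in \eqref{proU}, with $\epsilon\sim 2\sqrt2\,H(r_0,r_0)/p$ and $\rho\sim 1/p$: this is globally smooth, already contains both the Liouville bubble and the Green function tail, and has the correct inner width $\epsilon\sim 1/p$. There is also a structural difference in the reduction: instead of a Lyapunov--Schmidt scheme with a free point $\xi$ and a Lagrange multiplier $c_{p,\xi}$, the concentration point is expanded a priori as $s_\epsilon=r_0+\epsilon\sigma+\epsilon^2\tau$, and the constants $\sigma,\tau$ are determined by the solvability conditions of the two linear problems \eqref{zetal} for the higher-order correctors $z_{1,\epsilon},z_{2,\epsilon}$ (this is where assumption \eqref{ipo} enters, exactly as in Lemmas~4.2 and~4.5 of \cite{G1}). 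The remainder $\phi_\epsilon$ is then obtained by a direct contraction mapping, with no residual finite-dimensional equation to solve. Your Step~3 is therefore replaced by explicit algebra at the level of the correctors rather than by an implicit-function argument on $c_{p,\xi}$.
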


\begin{theorem}\label{mainmf} There exists $\lambda_0>0$ such that for any $\lambda\in(0,\lambda_0)$ problem
\begin{equation}\label{pl}
\left\{\begin{aligned}
&-v^{''}- n{f'(r)\over f(r)}v '+\kappa(r)v=\lambda e^v\quad r\in(0,1),\\
&v(0)=v(1),\quad v'(0)=v'(1).
\end{aligned}\right.
\end{equation}has a solution
$v_\lambda$ such that
$$\epsilon _\lambda v_\lambda(r)\to2\sqrt 2 G (r,r_0)\quad\hbox{uniformly in $[0,1]$ as $\lambda\to0$}$$
for a suitable choice of positive numbers $\epsilon_\lambda $'s such that $\epsilon_\lambda\to0$ as $\lambda\to0.$

In particular,    for ''most'' warping functions $f$'s and for "most" potentials $\kappa$'s, problem
\eqref{mf} has a solution invariant with respect to $N,$ i.e.
$u_\lambda(r,y)=v_\lambda(r)$  for any $y\in N,$ which concentrates along the  $(m-1)-$dimensional submanifold $\{r_0\}\times N$ of $M$ as $\lambda\to0.$
 \end{theorem}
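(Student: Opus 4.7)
The plan is a Lyapunov--Schmidt reduction. After multiplying \eqref{pl} by $f^n$, the problem becomes the Euler--Lagrange equation for
$$I_\lambda(v) = \tfrac12 \int_0^1 f^n\bigl((v')^2 + \kappa v^2\bigr)\,dr - \lambda\int_0^1 f^n e^v\,dr$$
on $H^1_{\mathrm{per}}(0,1)$. I introduce the 1D Liouville bubble
$$U_{\epsilon,\xi}(r) := \log\frac{1}{\lambda\,\epsilon^2\cosh^2\bigl((r-\xi)/(\sqrt 2\,\epsilon)\bigr)},$$
which satisfies $-U_{\epsilon,\xi}'' = \lambda e^{U_{\epsilon,\xi}}$ on $\mathbb R$ and concentrates at $\xi$ as $\epsilon\to 0$; a direct computation gives $\int_{\mathbb R}\lambda e^{U_{\epsilon,\xi}}\,dr = 2\sqrt 2/\epsilon$. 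Since $U_{\epsilon,\xi}$ is not in $\mathfrak P$, I project by letting $PU_{\epsilon,\xi}$ be the unique $\mathfrak P$-solution of $Lw = \lambda e^{U_{\epsilon,\xi}}$, where $Lw := -w'' - n\tfrac{f'}{f}w' + \kappa w$. Using the Green's function decomposition \eqref{deco}, one obtains in the outer region
$$\epsilon\,PU_{\epsilon,\xi}(r) \longrightarrow 2\sqrt 2\, G(r,\xi) \qquad \text{uniformly for } r \text{ away from }\xi,$$
while in inner coordinates $PU_{\epsilon,\xi} - U_{\epsilon,\xi}$ is a regular perturbation of order $(1/\epsilon)H(\xi,\xi) + o(1/\epsilon)$. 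The scale $\epsilon = \epsilon_\lambda\to 0$ is then pinned by matching the inner bubble (whose height at $\xi$ is of order $|\log\lambda| - 2\log\epsilon$) to the outer Green's function expansion at $r = \xi$.

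Next I analyze the linearized operator $\mathcal L_{\epsilon,\xi}\phi := L\phi - \lambda e^{PU_{\epsilon,\xi}}\phi$. In the rescaled variable $y = (r-\xi)/(\sqrt 2\,\epsilon)$ it approximates $-\partial_{yy} - 2\,\mathrm{sech}^2 y$, whose bounded kernel on $\mathbb R$ is spanned by $\tanh y$; pulled back to the original variable this corresponds to the translation mode $Z_{\epsilon,\xi} := \partial_\xi PU_{\epsilon,\xi}$. A Fredholm argument in weighted Sobolev spaces (exploiting self-adjointness of $L$ in $L^2(f^n\,dr)$) yields uniform invertibility of $\mathcal L_{\epsilon,\xi}$ on the orthogonal complement of $Z_{\epsilon,\xi}$, with operator norm growing at worst like $|\log\lambda|$. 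The ansatz error $E_\xi := L(PU_{\epsilon,\xi}) - \lambda e^{PU_{\epsilon,\xi}} = \lambda(e^{U_{\epsilon,\xi}} - e^{PU_{\epsilon,\xi}})$ is controlled because $PU_{\epsilon,\xi} - U_{\epsilon,\xi}$ is regular and localized against the concentrated density $\lambda e^{U_{\epsilon,\xi}}$, making $\|E_\xi\|_*$ small. A Banach contraction then produces a unique $\phi_{\epsilon,\xi}\perp Z_{\epsilon,\xi}$ solving the projected equation
$$L(PU_{\epsilon,\xi}+\phi_{\epsilon,\xi}) - \lambda e^{PU_{\epsilon,\xi}+\phi_{\epsilon,\xi}} = c\,Z_{\epsilon,\xi}$$
for some Lagrange multiplier $c = c(\epsilon,\xi)$, with $\|\phi_{\epsilon,\xi}\|$ small uniformly in $\xi$ close to $r_0$.

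Finally, I reduce to a finite-dimensional problem. Set $F_\lambda(\xi) := I_\lambda(PU_{\epsilon_\lambda,\xi}+\phi_{\epsilon_\lambda,\xi})$; critical points of $F_\lambda$ produce genuine solutions of \eqref{pl} (because $c \equiv 0$ at such points). A careful asymptotic expansion gives
$$F_\lambda(\xi) = A(\lambda) + B(\lambda)\,\mathcal V_{f,\kappa}(\xi) + o\bigl(B(\lambda)\bigr) \qquad\text{in } C^1 \text{ on a neighborhood of } r_0,$$
with $A(\lambda), B(\lambda)$ independent of $\xi$ and $B(\lambda) > 0$. The variable part $\mathcal V_{f,\kappa}(\xi) = H(\xi,\xi)/f^n(\xi)$ emerges from expanding $\int f^n\, e^{U_{\epsilon,\xi}}\,PU_{\epsilon,\xi}\,dr$ about $\xi$ and isolating the cross-interaction between the concentrating bubble (carrying weight $f^n(\xi)$) and the regular Green's function evaluated on the diagonal; the singular self-energy of the bubble contributes only to the $\xi$-independent term $A(\lambda)$. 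Since $r_0$ is a non-degenerate critical point of $\mathcal V_{f,\kappa}$, the implicit function theorem applied to $\nabla F_\lambda$ yields $\xi_\lambda\to r_0$ with $\nabla F_\lambda(\xi_\lambda) = 0$, and $v_\lambda := PU_{\epsilon_\lambda,\xi_\lambda} + \phi_{\epsilon_\lambda,\xi_\lambda}$ is the desired solution; the limit $\epsilon_\lambda v_\lambda \to 2\sqrt 2\, G(\cdot, r_0)$ follows from the outer expansion of $PU_{\epsilon_\lambda,\xi_\lambda}$. The main obstacle will be the $C^1$-expansion of $F_\lambda$: one must cleanly separate the logarithmic self-energy of the bubble (contributing to $A(\lambda)$) from the $\xi$-dependent cross-terms with the regular part of the Green's function (contributing $\mathcal V_{f,\kappa}$), while keeping uniform control in $\xi$, so that the non-degeneracy of $r_0$ can be exploited despite $B(\lambda)$ being of lower order than $A(\lambda)$.
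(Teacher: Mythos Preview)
Your approach is sound but genuinely different from the paper's. The paper follows Grossi \cite{G2} and does \emph{not} perform a variational Lyapunov--Schmidt reduction with a free concentration parameter. Instead, the concentration point is fixed from the outset as $s_\epsilon=r_0+\epsilon\sigma+\epsilon^2\tau$, and two explicit higher-order correctors $z_{1,\epsilon},z_{2,\epsilon}$ are built (solving linear problems of the form \eqref{zetal}) so that the ansatz $w_\epsilon+\epsilon z_{1,\epsilon}+\epsilon^2 z_{2,\epsilon}$ has error small enough for a direct contraction-mapping argument to produce the remainder $\phi_\epsilon$; the constants $\sigma,\tau$ are determined by solvability conditions for these linear problems, and it is precisely here that the condition $\partial_rH(r_0,r_0)=\tfrac12$ (equivalently, $\mathcal V_{f,\kappa}'(r_0)=0$) and its non-degeneracy enter. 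Your route---project orthogonally to the translation mode, solve the auxiliary equation, then locate $\xi$ by expanding the reduced energy---is the del~Pino--Kowalczyk--Musso style reduction and is more flexible (it would adapt readily to multi-peak configurations), whereas the paper's constructive scheme gives sharper explicit asymptotics and avoids the delicate $C^1$ energy expansion you flag as the main obstacle. One caution on your expansion: because the Green's function here is not symmetric ($G(r,s)f^n(r)=G(s,r)f^n(s)$), you should check carefully that the $\xi$-dependent leading term of $F_\lambda$ really is a multiple of $\mathcal V_{f,\kappa}(\xi)=H(\xi,\xi)/f^n(\xi)$ rather than, say, $f^n(\xi)H(\xi,\xi)$ or $H(\xi,\xi)$ alone; any of these has the same critical points only up to the weight, and Proposition~\ref{cor1} shows that the correct characterization $\partial_rH(r_0,r_0)=\tfrac12$ is specifically equivalent to $\mathcal V_{f,\kappa}'(r_0)=0$.
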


Let us make some remarks.

\begin{remark} If $(M,g)$ is a surface of revolution then we can find a solutions to problems \eqref{le} and \eqref{mf} which concentrate
along a curve which is the fiber $\{r_0\}\times S^1$. One could expect that the curve is a geodesic on the surface, but in general  this does not happen. Indeed, it would be a geodesic if $r_0$ was a critical point of the warping function $f$ (see \cite{BO}). In general $r_0$ is only a critical point of the  function
 $\mathcal V_{f,\kappa}$ defined in \eqref{A}.
\end{remark}

\begin{remark}
It would be interesting to find this kind of solutions in more general manifolds. For example, the easiest question which naturally arises is the following.
   Is it possible to build this kind of solutions if $(M,g)$ is a surface  whose genus  is not zero?
\end{remark}

 \bigskip

The paper is motivated by some recent results obtained by Grossi in \cite{G1} and \cite{G2} in the Euclidean case. More precisely, Grossi finds radial solutions to  the supercritical problems
$$
-\Delta u +\kappa(x)u=u^p\ \hbox{or} \ \lambda e^u\quad \hbox{in}\ \Omega, \qquad u=0\quad\hbox{on}\ \partial\Omega $$
if the exponent $p$ is large enough or the real parameter $\lambda$ is positive and small enough,
when $\Omega$ is the unit ball in the euclidean space $\rr^n $ and $\kappa$ is a  suitable positive radial function.
In particular, he is left to study the corresponding ODE problems
$$ -v^{''}- {n -1\over r}v '+\kappa(r)v=v^p\ \hbox{or}\ \lambda e^v  \quad \hbox{if}\ r\in(0,1),\qquad v'(0)=0,\ v(1)=0.$$
which are similar to \eqref{pl} and \eqref{pp}.
The proof given by Grossi relies on a fixed point argument and  the main tool is a clever use
of the Green's function associated to the operator
$-v^{''}- {n -1\over r}v '+\kappa(r)v$ with   boundary condition $v'(0)=0,\ v(1)=0.$

In this paper, we follow the same strategy developed by Grossi in \cite{G1} and \cite{G2}.
In Section \ref{progreen} we study the Green's function defined in \eqref{green} with periodic boundary condition and
in Section \ref{theproof} we sketch the proof of Theorem \ref{mainmf} and Theorem \ref{mainle}  which is obtained arguing exactly as in
\cite{G1} and \cite{G2}. The proof requires the crucial non degeneracy assumptions \eqref{ipo}, which is in general hard to check.
In Section \ref{generi} we prove Theorem \ref{main} which states  that assumption \eqref{ipo} is true for most warping functions $f$'s
and for most potentials $\kappa$'s.

\section{Properties of Green's function}\label{progreen}

For sake of simplicity, let us set $H:=H_{f,\kappa}.$

 \begin{lemma}\label{proH}
\begin{itemize}
\item[(i)] $H (r,s)$ is positive and it is uniformly bounded in $(0,1)$  with respect to $s,$
\item[(ii)] $G (r,s)f^n(r)=G (s,r)f^n(s)$  for any $r,s\in[0,1],$
\item[(iii)] $H (s,r)=H (r,s){f^n(r)\over f^n(s)}-f^n(r)\int\limits_s^r {1\over f^n(\sigma)}d\sigma$ for any $r,s\in[0,1],$
    \item[(iv)] $\partial_sH  (t,t)=\partial _rH (t,t)+nH (t,t){f'(t)\over f(t)}-1$ for  any $t\in[0,1].$
\end{itemize}
\end{lemma}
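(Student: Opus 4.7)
The plan is to establish (ii) as the structural symmetry identity, then derive (iii) from (ii) by algebra and (iv) from (iii) by differentiation, and handle positivity and boundedness in (i) separately. For (ii), I would rewrite the operator in weighted divergence form
$$-v''-n\tfrac{f'}{f}v'+\kappa v=-\tfrac{1}{f^n}\bigl(f^n v'\bigr)'+\kappa v,$$
which shows it is self-adjoint in $L^2([0,1],f^n\,dr)$ under periodic boundary conditions; the boundary terms from integration by parts cancel because $f$, $u$, $v$ are all periodic. Applying Green's identity to $u=G(\cdot,s_1)$ and $v=G(\cdot,s_2)$ together with the defining property of the Green's function yields $G(s_1,s_2)f^n(s_1)=G(s_2,s_1)f^n(s_2)$, which is (ii) after relabeling.

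For (iii), I would substitute $G=\Gamma+H$ into (ii) and split into the cases $r<s$ and $r>s$; using \eqref{gamma} in both cases gives
$$\Gamma(r,s)f^n(r)-\Gamma(s,r)f^n(s)=-f^n(r)f^n(s)\int_s^r \frac{1}{f^n(\sigma)}\,d\sigma,$$
and dividing by $f^n(s)$ yields the claimed formula. For (iv), I would differentiate the identity of (iii) with respect to $s$ while holding $r$ fixed, then set $r=s=t$: the derivative of $1/f^n(s)$ contributes the $nH(t,t)f'(t)/f(t)$ term via $\partial_s(1/f^n(s))=-nf'(s)/f^{n+1}(s)$, while $\partial_s\int_s^r 1/f^n\,d\sigma=-1/f^n(s)$ combined with the $-f^n(r)$ prefactor contributes the constant $-1$; rearranging gives the stated formula.

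For (i), nonpositivity of $\Gamma$ from \eqref{gamma} combined with positivity of $G$ on $[0,1]^2$ gives $H=G-\Gamma>0$. Uniform boundedness of $H$ in $s$ follows from the continuity of $G$ on the compact square $[0,1]^2$ (the 1D Green's function has no singularity) and the uniform bound on $\Gamma$ ensured by $f\in\mathfrak P^+$ being bounded away from zero. The main step requiring care is positivity of $G$: since coercivity of the operator does not imply $\kappa\ge0$ pointwise, a naive maximum principle is not immediate. My plan is to construct $G$ explicitly from two linearly independent solutions of the homogeneous equation via the Wronskian and read off the sign, or alternatively to approximate $\delta_s$ by nonnegative mollifiers and invoke a Perron/variational argument exploiting the self-adjoint weighted structure established in (ii).
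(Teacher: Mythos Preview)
Your treatment of (ii)--(iv) is essentially the paper's: the authors prove (ii) by exactly the weighted self-adjointness argument you describe (multiplying the equation for $G(\cdot,s)$ by $G(\cdot,t)$, integrating with weight $f^n$, and comparing), and they dispatch (iii)--(iv) with the one line ``they follow from (ii) using the decomposition of $G$.'' Your explicit computations for (iii) and (iv) are correct; the small sign description in (iv) (the integral term contributes $+1$ on the right-hand side before rearranging, not $-1$) does not affect the conclusion.

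Where you genuinely diverge is in (i). For boundedness, the paper does \emph{not} appeal to continuity of $G$ on the square. Instead it runs a longer argument: integrate the equation to bound $\partial_r H$, then bound $\int a\kappa H$, then multiply by $H$ and use coercivity plus the embedding $W^{1,2}([0,1])\hookrightarrow L^\infty([0,1])$ to close. Your route---observe that a one-dimensional Green's function for an invertible second-order operator is continuous on $[0,1]^2$, hence bounded, and $\Gamma$ is bounded because $f\in\mathfrak P^+$---is shorter and perfectly adequate here; the paper's argument has the virtue of being self-contained and of producing the intermediate estimate on $\partial_r H$, but yours gets to the stated claim faster.

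Your caution about positivity of $G$ is unnecessary. Coercivity of the operator in $H^1_{\mathrm{per}}$ with weight $f^n$ is exactly the statement that the first periodic eigenvalue is positive, and for a Sturm--Liouville operator in divergence form this is equivalent to the validity of the (weak and strong) maximum principle. So ``$G>0$ by maximum principle'' is justified without the Wronskian or mollifier detours you propose; those would work too, but are not needed.
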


\begin{proof}

We set $a(r):=f^n(r).$ Therefore,
$a\in C^2\([0,1]\),$
  $a(0)=a(1),$ $a'(0)=a'(1)$ and $\min\limits_{r\in[0,1]}a(r)>0.$

{\em Proof of (i)}
Since $\Gamma (r,s)$ is negative and by maximum principle   $G (r,s)$ is positive, we immediately deduce that $H(r,s)>0$ for any $r,s\in[0,1].$
Let us prove that $H$ is bounded in $[0,1]\times[0,1].$
By \eqref{hn}  using the positivity of $G$, we deduce that
\begin{equation}\label{e1}
{\partial\over\partial r}\(a(r)\partial _r H(r,s)\)=a(r)\kappa(r)G(r,s)\ge0,\quad \forall\ r,s\in[0,1].
\end{equation}
By \eqref{e1} we get   estimate
$$a(1)\partial_r H(1,s)\ge a(r)\partial _r H(r,s)\ge a(0)\partial_r H(0,s),\quad \forall\ s\in[0,1],$$
which combined with the periodic boundary condition in \eqref{hn} gives
\begin{equation}\label{e2}c_1\ge \partial_r H(r,s)-{a(0)\over a(r)}\partial_r H(0,s)\ge 0,\quad \forall\ r,s\in[0,1],
\end{equation}
Here and in the follows, $c_i$'s denote    some positive constants.
We integrate \eqref{e2} over $[0,r]$ and $[r,1]$ and we get
$$cr\ge H(r,s)-H(0,s)-a(0)\partial_r H(0,s)\int\limits_0^r{1\over a(t)}dt\ge0,\quad \forall\ r,s\in[0,1]$$
and
$$c(1-r)\ge H(1,s)-H(r,s)-a(0)\partial_r H(0,s)\int\limits_r^1{1\over a(t)}dt\ge0,,\quad \forall\ r,s\in[0,1]$$
respectively.
Now, we add the previous two inequalities and we use the periodic boundary condition in \eqref{hn}, so we get
$$c\ge a(s)\int\limits_s^1{1\over a(t)}dt-a(0)\partial_r H(0,s)\int\limits_r^1{1\over a(t)}dt\ge0,,\quad \forall\ s\in[0,1],$$
which implies
$\max\limits_{s\in[0,1]}|\partial_r H(0,s)|\le c_2.$ By  the periodic boundary condition in \eqref{hn} we also
deduce  $\max\limits_{s\in[0,1]}|\partial_r H(1,s)|\le c_3.$ Then, by \eqref{e2} it follows
\begin{equation}\label{e3}\max\limits_{(r,s)\in[0,1]\times[0,1]}|\partial_r H(r,s)|\le c_4.
\end{equation}

Now,  we integrate equation \eqref{hn} over $[0,1]$ and we get
$$-a(1)\partial_r H(1,s) +a(0)\partial_r H(0,s)+\int\limits_0^1 a(r)\kappa(r)H(r,s)=-\int\limits_0^1a(r)\kappa(r)\Gamma (r,s)dr,$$
which implies, together with \eqref{e3}, that
\begin{equation}\label{e4}\int\limits_0^1 a(r)\kappa(r)H(r,s)dr\le c_5,\quad \forall\ s\in[0,1].\end{equation}
Moreover, if we multiply equation \eqref{hn} by $H(r,s)$ and integrate over $[0,1]$ we get
\begin{align*}&-a(1)\partial_r H(1,s)H(1,s) +a(0)\partial_r H(0,s)H(0,s)+\int\limits_0^1 a(r)\( \(\partial_r H\)^2(r,s)+\kappa(r)H^2(r,s)\)dr\\
&=-\int\limits_0^1a(r)\kappa(r)\Gamma (r,s)H(r,s)dr,\end{align*}
which implies together with \eqref{e4} and the periodic boundary condition in \eqref{hn}
$$\int\limits_0^1 a(r) \(\(\partial_r H\)^2(r,s)+\kappa(r)H^2(r,s)\)dr\le c_6+c_7H(1,s),\quad \forall\ s\in[0,1]. $$
Then we deduce
$$\|H(\cdot,s)\|_{W^{1,2}([0,1])}^2\le c_6+c_7\|H(\cdot,s)\|_{L^\infty([0,1])},\quad \forall\ s\in[0,1],$$
which implies together with the  continuous embedding $W^{1,2}([0,1])\hookrightarrow L^\infty([0,1])$ that
$$\|H(\cdot,s)\|_{L^\infty([0,1])}\le c_8,\quad \forall\ s\in[0,1],$$
namely
 $\max\limits_{(r,s)\in[0,1]\times[0,1]}H(r,s) \le c_8.$
That proves our claim.

{\em Proof of (ii)}
We know that
\begin{equation}\label{gi}\left\{
\begin{aligned}
&-{\partial\over\partial r}\({\partial G\over\partial r}(r,s)a(r)\)+\kappa(r)G(r,s)a(r)=\delta_s(r)a(r),\ r\in[0,1]\\
&G(0,s)=G(1,s),\quad{\partial G\over \partial r}(0,s)={\partial G\over \partial r}(1,s).
\end{aligned}\right.\end{equation}

Let $s,t\in[0,1].$ We multiply \eqref{gi} by $G(r,t)$ and integrate over $[0,1]$, so that
$$\int\limits_{0}^1a(r){\partial G\over\partial r}(r,s){\partial G\over\partial r}(r,t)dr+
\int\limits_{0}^1a(r)\kappa(r)G (r,s)G(r,t)dr=\int\limits_{0}^1\delta_s(r)a(r)G(r,t)dr=a(s)G(s,t).$$
Now,   we multiply \eqref{gi} with $s$ replaced by $t$ by $G(r,s)$ and integrate over $[0,1]$, so that
$$\int\limits_{0}^1a(r){\partial G\over\partial r}(r,s){\partial G\over\partial r}(r,t)dr+
\int\limits_{0}^1a(r)\kappa(r)G (r,s)G(r,t)dr=\int\limits_{0}^1\delta_t(r)a(r)G(r,s)dr=a(t)G(t,s).$$
The L.H.S.'s of the two equations are equal and so  {\em (ii)}  follows.

 {\em Proof of (iii) and (iv)}
 They follow by (ii) using the decomposition of $G.$

\end{proof}

The regularity of the regular part $H $ is studied in the following lemma.

\begin{lemma}
\label{regH}
\begin{itemize}
\item[(i)] $H \in C^2\([0,1]\times[0,1]\).$
\item[(ii)] $H \in C^3\([0,1]\times[0,1]\setminus D\),$ $D:=\{(r,r)\ |\ r\in[0,1]\}.$
\item[(iii)] There exist the limits
$$\partial_{rrr}^+H (\bar r,\bar r):=\lim\limits_{(r,s)\to(\bar r,\bar r)\atop r>s}\partial  _{rrr}H (r,s)\ \hbox{and}
\ \partial _{rrr}^-H (\bar r,\bar r):=\lim\limits_{(r,s)\to(\bar r,\bar r)\atop r<s}\partial  _{rrr}H (r,s).$$
The same happens for all the third order derivatives $\partial  _{rrs}H   ,$ $\partial  _{rsr}H ,$ $\partial  _{srr}H ,$ $\partial  _{rss}H ,$ $\partial  _{srs}H ,$ $\partial  _{ssr}H $ and $\partial  _{sss}H .$
\end{itemize}
\end{lemma}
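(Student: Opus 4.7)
The plan is to view the boundary value problem \eqref{hn} defining $H(\cdot,s)$ as a coercive linear second-order ODE in $r$ with the parameter $s$ entering only through the source $-\kappa(r)\Gamma(r,s)$ and through the periodic-like boundary data. Regularity in $r$ will come directly from ODE theory, while regularity in $s$ will be imported from regularity in $r$ through the symmetry identity of Lemma~\ref{proH}(iii). Crucially, $\Gamma$ is known explicitly: it is $C^2$ on each of the closed half-planes $\{r\ge s\}$ and $\{r\le s\}$ (on the latter it is identically zero), and only the normal derivative $\partial_r\Gamma$ jumps across the diagonal, by $-1$.

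For part (i), fix $s$ and observe that $H(\cdot,s)$ is the unique periodic solution of a coercive linear ODE with $C^2$ coefficients and jointly continuous source $-\kappa(r)\Gamma(r,s)$. Standard continuous-dependence results for ODEs then yield that $H$, $\partial_rH$ and $\partial_{rr}H$ are jointly continuous on $[0,1]\times[0,1]$ (the last one being read directly off the equation). To obtain the derivatives in $s$, I would differentiate the symmetry identity
\begin{equation*}
H(s,r)=H(r,s)\,\frac{f^n(r)}{f^n(s)}-f^n(r)\int_s^r\frac{d\sigma}{f^n(\sigma)}
\end{equation*}
once and twice in the second argument $r$; this expresses $\partial_sH$, $\partial_{rs}H$ and $\partial_{ss}H$ evaluated at the ``swapped'' point $(s,r)$ as combinations of $H(r,s)$, $\partial_rH(r,s)$, $\partial_{rr}H(r,s)$ with smooth coefficients depending only on $f$, so after relabelling they are jointly continuous on $[0,1]\times[0,1]$.

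For part (ii), off the diagonal the source $-\kappa(r)\Gamma(r,s)$ is jointly $C^1$ in $(r,s)$, because $\Gamma$ and its first partial derivatives are given by explicit expressions in $f\in C^2$. Differentiating the ODE once more in $r$ then yields $\partial_{rrr}H$ continuous on $[0,1]^2\setminus D$; differentiating it instead in $s$ (which is legitimate away from $D$, where $\partial_s\Gamma$ is continuous) gives $\partial_{rrs}H$ continuous. The remaining third-order partials are then produced by differentiating the symmetry identity of Lemma~\ref{proH}(iii) once more in $r$ and relabelling, which expresses $s$-derivatives at the swapped point in terms of the $r$-derivatives of $H$ already known to be continuous.

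Part (iii) is essentially the same argument restricted to the diagonal: because $\Gamma$, $\partial_r\Gamma$ and $\partial_s\Gamma$ each extend continuously up to $D$ from inside each of the two closed half-planes, the right-hand sides of the bootstrapped ODEs and of the differentiated symmetry identity inherit one-sided limits on $D$, which are then transferred to all eight third-order partials of $H$. The main obstacle throughout is bookkeeping: because $f$ and $\kappa$ are only $C^2$, one has to differentiate each identity only as many times as this regularity allows, and one must carefully avoid differentiating the source $\Gamma$ across the singular set $D$; once these constraints are respected, the whole argument reduces to applying continuous dependence on the source to a coercive one-dimensional linear ODE.
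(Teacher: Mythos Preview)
Your overall strategy is sound, and for $\partial_{rrr}H$ it coincides with the paper's: differentiate \eqref{hn} once in $r$, which introduces only $(a'/a)'$ and $\kappa'$ and keeps the right-hand side piecewise continuous. Where you diverge is in the treatment of the $s$-derivatives. The paper does \emph{not} use the symmetry identity of Lemma~\ref{proH}(iii) for this. Instead, it builds an explicit auxiliary function $\alpha(r,s)$ that matches the inhomogeneous boundary data in \eqref{hn}; since $H-\alpha$ is then periodic, the Green representation \eqref{g1} yields
\[
H(r,s)=\int_0^1\bigl[A(t,s)-\kappa(t)\Gamma(t,s)\bigr]G(r,t)\,dt,\qquad A=\partial_{rr}\alpha+\tfrac{a'}{a}\partial_r\alpha-\kappa\alpha,
\]
which places all the $s$-dependence in the explicit factors $A(\cdot,s)$, $\Gamma(\cdot,s)$ and the integration limits. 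Successive $s$-differentiation under the integral then produces $\partial_sH$, $\partial_{ss}H$ and $\partial_{sss}H$; the one-sided limits in (iii) come from the jump of $\partial_sG(r,s)$ across $r=s$. Your route via Lemma~\ref{proH}(iii) is a legitimate and arguably cleaner alternative that avoids constructing $\alpha$, while the paper's integral formula has the advantage of giving the $s$-derivatives directly, with no swap.

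One point to tighten in your argument for (i): differentiating the symmetry identity once and twice in $r$ gives $\partial_sH(s,r)$ and $\partial_{ss}H(s,r)$ in terms of $H,\partial_rH,\partial_{rr}H$ at $(r,s)$, but it does \emph{not} deliver the mixed partial $\partial_{rs}H$ this way. If you differentiate once in $r$ and once in $s$, the left side becomes $\partial_{rs}H(s,r)$ while the right side still contains $\partial_{rs}H(r,s)$ (and since $g(r,s)g(s,r)=1$, combining the relation with its swap yields only a consistency identity, not a formula). The easiest fix is the device you already invoke in (ii): differentiate the equation \eqref{hn} in the parameter $s$, so that $\partial_sH(\cdot,s)$ solves a second-order ODE in $r$ whose source $-\kappa\,\partial_s\Gamma$ is bounded (with a jump only at $r=s$); standard ODE dependence then gives $\partial_{rs}H$ jointly continuous.
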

\begin{proof}
(i) follows by standard regularity theory  applied to equation \eqref{hn}.

If we differentiate  \eqref{hn} by $r$ we deduce that
\begin{align*}
&-\partial  _{rrr}H (r,s)-{a'(r)\over a(r)}\partial  _{rr}H (r,s)-{a^{''}(r)a(r)-(a'(r))^2\over a^2(r)}\partial_r H(r,s)+\kappa'(r)H(r,s)+\kappa(r)\partial_rH(r,s)\\
&= 0\  \hbox{if}\ r<s, \quad  =-\kappa'(r)a(s)\int\limits^r_s{1\over a(t)}dt-\kappa(r){a(s)\over a(r)}\  \hbox{if}\ r>s. \end{align*}
Therefore, by standard regularity theory we immediately deduce  (ii).
We also get   the existence of the limits of the derivative   $ \partial  _{rrr}H $ as in (iii).
Let us prove claim (iii) also for the limits of the derivative $ \partial  _{sss}H .$ All the other derivatives
 can be managed in a similar way.
Let
$$\alpha(r,s):={a(s)\over 2a(1)}r^2+\(a(s)\int\limits^1_s{1\over a(t)}dt-{a(s)\over 2a(1)}\)r,\quad r,s\in[0,1].$$
It is immediate to see that $H$ and $\alpha$ satisfies the same periodic boundary condition as in \eqref{hn}.
Therefore, using the definition of Green's function in \eqref{g1} and  the fact that $H$ solves problem  \eqref{hn}, it is easy to check that
$$
H(r,s)=\int\limits_0^1\[ A(t,s)-\kappa(t)\Gamma(t,s)\]G(r,t)dt=\int\limits_0^1 A(t,s)G(r,t)dt-a(s)\int\limits_0^1 \kappa(t) G(r,t)\int\limits_s^t{1\over a(\sigma)}d\sigma dt,$$
where
$$A(r,s):=\partial  _{rr}\alpha (r,s)+{a'(r)\over a(r)}\partial  _{r}\alpha (r,s)-\kappa(r)\alpha (r,s).$$
Then
$$\partial_sH(r,s)= \int\limits_0^1 \partial_sA(t,s)G(r,t)dt-a'(s)\int\limits_s^1\kappa(t) G(r,t)\int\limits^t_s{1\over a(\sigma)}d\sigma dt+\int\limits_s^1\kappa(t) G(r,t)dt$$
and
$$\partial_{ss}H(r,s)= \int\limits_0^1 \partial_{ss}A(t,s)G(r,t)dt-a^{''}(s)\int\limits_s^1\kappa(t)G(r,t)\int\limits^t_s{1\over a(\sigma)}d\sigma dt+{a'(s)\over a(s)}\int\limits_s^1\kappa(t)G(r,t)dt.$$
We differentiate again by $s $ and taking into account that
 $$G_s(r,s)=\left\{
\begin{aligned}
&\partial _sH (r,s)\ &\hbox{if}\ r<s\\
&\partial_r H (r,s)+1-a'(s)\int\limits_s^r{1\over a(t)}dt\ &\hbox{if}\ r>s \\
\end{aligned}\right.$$
we deduce that $\partial_{sss}H $ is a continuous function in $[0,1]\times[0,1]\setminus D $ and also the existence of the limits as in (iii).

That concludes the proof.\end{proof}

\begin{proposition}\label{cor1}
Let $\mathcal V=\mathcal V_{f,\kappa}$ as in \eqref{A}.
\begin{itemize}
\item[(i)] $\mathcal V \in C^2([0,1]).$
\item[(ii)]
$\mathcal V '(r_0)=0\  \hbox{(i.e. $r_0\in(0,1)$ is a   critical point of $\mathcal V $)}$ if and only if $\partial _r H (r_0,r_0)={1\over2} .$

 \item[(iii)] There always exists a critical point $r_0\in(0,1)$ of $\mathcal V .$

\item[(iv)]  $\mathcal V^{''}(r_0)\not=0$ (i.e. $r_0$ is a non degenerate critical point of $\mathcal V$)  if and only if $\partial _{rr}H (r_0,r_0)+\partial_{rs}H (r_0,r_0)\not=0.$

    \end{itemize}\end{proposition}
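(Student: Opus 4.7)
\textbf{Proof plan for Proposition \ref{cor1}.}

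Part (i) is essentially a regularity book-keeping step: Lemma~\ref{regH}(i) gives $H\in C^2([0,1]\times[0,1])$, so the diagonal restriction $r\mapsto H(r,r)$ is $C^2$, and dividing by the $C^2$ positive function $f^n$ keeps us in $C^2([0,1])$.

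For part (ii), I would simply differentiate $\mathcal V(r)=H(r,r)/f^n(r)$. Using $\frac{d}{dr}H(r,r)=\partial_r H(r,r)+\partial_s H(r,r)$ one gets
\[
\mathcal V'(r)=\frac{\partial_r H(r,r)+\partial_s H(r,r)}{f^n(r)}-n\,\frac{H(r,r)\,f'(r)}{f^{n+1}(r)}.
\]
Now I would plug in the identity $\partial_s H(t,t)=\partial_r H(t,t)+n H(t,t)f'(t)/f(t)-1$ from Lemma~\ref{proH}(iv). The two terms containing $f'(r)$ cancel exactly, leaving the clean expression
\[
\mathcal V'(r)=\frac{2\,\partial_r H(r,r)-1}{f^n(r)},
\]
from which $\mathcal V'(r_0)=0\iff\partial_r H(r_0,r_0)=1/2$ is immediate.

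Part (iii) is the step I expect to require the most care, because $H$ itself does \emph{not} satisfy periodic boundary conditions in $r$. The strategy is to show $\mathcal V(0)=\mathcal V(1)$ and then invoke the extreme value theorem: on a compact interval, a continuous function with equal boundary values either is constant (every point critical) or attains its maximum or minimum in the open interval (giving an interior critical point). To establish $\mathcal V(0)=\mathcal V(1)$, note first $f^n(0)=f^n(1)$, so it suffices to prove $H(0,0)=H(1,1)$. I would use Lemma~\ref{proH}(iii) with $s=1,\,r=0$ (together with $f^n(0)=f^n(1)$) to get $H(1,0)=H(0,1)+f^n(0)\int_0^1 f^{-n}(\sigma)\,d\sigma$; combine this with the boundary condition in \eqref{hn} at $s=0$, namely $H(0,0)=H(1,0)-f^n(0)\int_0^1 f^{-n}(t)\,dt$, to obtain $H(0,0)=H(0,1)$; and finally use \eqref{hn} at $s=1$ to conclude $H(0,1)=H(1,1)$. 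Stringing these equalities gives $H(0,0)=H(1,1)$, hence $\mathcal V(0)=\mathcal V(1)$.

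For part (iv), using the formula $\mathcal V'(r)=(2\partial_r H(r,r)-1)/f^n(r)$ from part (ii), I would differentiate once more. At a critical point $r_0$ the numerator $2\partial_r H(r_0,r_0)-1$ vanishes, so the product-rule term involving $(f^n)'(r_0)$ drops out and only the derivative of the numerator survives:
\[
\mathcal V''(r_0)=\frac{2}{f^n(r_0)}\,\frac{d}{dr}\bigl[\partial_r H(r,r)\bigr]\bigg|_{r=r_0}
=\frac{2}{f^n(r_0)}\bigl[\partial_{rr}H(r_0,r_0)+\partial_{rs}H(r_0,r_0)\bigr],
\]
where the chain rule application is legitimate since $H\in C^2$ by Lemma~\ref{regH}(i). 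Since $f^n(r_0)>0$, non-degeneracy of $r_0$ is equivalent to $\partial_{rr}H(r_0,r_0)+\partial_{rs}H(r_0,r_0)\neq 0$, as claimed.
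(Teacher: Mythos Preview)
Your proposal is correct and follows essentially the same approach as the paper: part (i) via Lemma~\ref{regH}, parts (ii) and (iv) via the direct computation using Lemma~\ref{proH}(iv) (which the paper merely calls ``a straightforward computation''), and part (iii) by combining the boundary condition in \eqref{hn} at $s=0$ and $s=1$ with Lemma~\ref{proH}(iii) to obtain $H(0,0)=H(1,1)$ and hence $\mathcal V(0)=\mathcal V(1)$. Your write-up in fact spells out more detail than the paper does, but the logical route is identical.
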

\begin{proof}
(i) follows by Lemma \eqref{regH}, while (ii) and (iv) follow by a straightforward computation, using (iv) of Lemma \ref{proH}. Let us prove (iii).
By the periodic boundary condition in \eqref{hn} we deduce
$$H(0,0)=H(1,0)-f^n(0)\int\limits_0^1{1\over f^n(t)}dt\quad\hbox{and}\quad H(0,1)=H(1,1).$$
and by (iii) of Lemma \ref{proH} we deduce
$$H(0,1)=H(1,0)-f^n(0)\int\limits_0^1{1\over f^n(t)}dt.$$
 Combining the three relations we immediately get $H(0,0)=H(1,1)$ and so  $\mathcal V(0) =\mathcal V(1).$ Therefore, either $\mathcal V$ is constant on $[0,1]$ or it has a minimum or a maximum point in $(0,1).$

\end{proof}

\section{A generic result}\label{generi}

In this section we will prove    Theorem \ref{main} using an abstract transversality theorem previously used by Quinn \cite{Q}, Saut and Temam \cite{ST} and Uhlenbeck \cite{U}. 

First of all, we point out that by Proposition \ref{cor1}, we are led to consider the map (see \eqref{perio2})
\begin{equation}\label{10} \mathfrak V(r,f,\kappa)=\partial _t H_{f,\kappa}(t,r)_{|_{t=r}}-{1\over2},\ r\in[0,1],\ f\in\mathfrak P^+,\ \kappa\in\mathfrak P_c.
\end{equation}
Indeed, $\bar r$ is a critical point of $\mathcal V_{f,\kappa}$ if and only if  $\mathfrak V(r,f,\kappa)=0$ and it is non degenerate if and only if
$\partial _r \mathfrak V(r,f,\kappa)_{|_{r=\bar r}}\not=0.$

 We shall apply the following abstract transversality theorem to the map $F(r,f):=\mathfrak V(r,f,\kappa)$ in the case (i) when $\kappa$ is fixed
 and to the map $K(r,\kappa):=\mathfrak V(r,f,\kappa)$ in the case (ii) when $f$ is fixed  (see \cite{Q,ST,U}).

 \begin{theorem}\label{tran}
 Let $X,Y,Z$ be three Banach spaces and $U\subset X,$ $V\subset Y$ open subsets.
 Let $F:U\times V\to Z$ be a $C^\alpha-$map with $\alpha\ge1.$ Assume that

 \begin{itemize}
 \item[i)] for any $y\in V$, $F(\cdot,y):U\to Z$ is a Fredholm map of index $l$ with $l\le\alpha;$
 \item[ii)] $0$ is a regular value of $F$, i.e. the operator $F'(x_0,y_0):X\times Y\to Z$ is onto at any point $(x_0,y_0)$ such that $F(x_0,y_0)=0;$
 \item[iii)] the map  $\pi\circ i:F^{-1}(0)\to Y$ is $\sigma-$proper, i.e.  $F^{-1}(0)=\cup_{\eta=1}^{+\infty} C_\eta$
 where $C_\eta$ is a closed set and the restriction $\pi\circ i_{|_{C_\eta}}$ is proper for any $\eta$; here $i:F^{-1}(0)\to Y$ is the canonical embedding and $\pi:X\times Y\to Y$ is the projection.
 \end{itemize}

 Then the set
$\Theta:=\left\{y\in V\ :\ 0\ \hbox{is  a regular value of } F(\cdot,y)\right\}$
 is a  residual subset of $V$, i.e. $V\setminus \Theta$ is a countable union of closet subsets without interior points.

\end{theorem}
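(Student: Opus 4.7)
The plan is to recognize this as the classical Sard--Smale transversality theorem of Smale (adapted with $\sigma$-properness as in Quinn, Saut--Temam, Uhlenbeck), and to execute the standard three-step argument: realize $F^{-1}(0)$ as a Banach submanifold, transfer the Fredholm property from the partial map $F(\cdot, y)$ to the projection $P := \pi\circ i$, and then apply Sard--Smale to $P$.

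First I would verify that $\mathcal{M} := F^{-1}(0)$ is a $C^\alpha$ Banach submanifold of $U\times V$. At any point $(x_0,y_0)\in \mathcal{M}$, assumption (ii) says $F'(x_0,y_0):X\times Y\to Z$ is surjective; since $F_x(x_0,y_0)$ is Fredholm of index $l$ by (i), its range is closed with finite codimension, and one checks that the kernel of $F'(x_0,y_0)$ admits a topological complement in $X\times Y$ (one can split off a finite-dimensional complement of the range of $F_x$ inside $Y$). The implicit function theorem in Banach spaces then gives a local $C^\alpha$ chart for $\mathcal{M}$, with tangent space $T_{(x_0,y_0)}\mathcal{M} = \ker F'(x_0,y_0)$.

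Next I would show that the restricted projection $P:\mathcal{M}\to V$ is $C^\alpha$ and Fredholm of index $l$ at every point. The derivative $dP_{(x_0,y_0)}$ is the restriction of the second projection to $\ker F'(x_0,y_0)$. A direct diagram chase identifies
\[
\ker dP_{(x_0,y_0)} \;\cong\; \ker F_x(x_0,y_0), \qquad \mathrm{coker}\,dP_{(x_0,y_0)} \;\cong\; \mathrm{coker}\,F_x(x_0,y_0),
\]
because $(h,k)\in \ker F'$ with $k=0$ means $h\in \ker F_x$, and surjectivity of $F'$ means that the obstruction to hitting a given $k\in Y$ coming from $dP$ is precisely the obstruction of $F_x$ to hitting $-F_y(x_0,y_0)k$. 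Hence $dP$ is Fredholm with the same index $l$ as $F_x$. In particular, a value $y_0\in V$ is regular for $P$ if and only if $F_x(x_0,y_0)$ is surjective for every $(x_0,y_0)\in P^{-1}(y_0)$, i.e.\ if and only if $0$ is a regular value of $F(\cdot, y_0)$; so the target set $\Theta$ is exactly the set of regular values of $P$.

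Finally I would invoke the Sard--Smale theorem. The map $P$ is $C^\alpha$ and Fredholm of index $l\le \alpha$, and by assumption (iii) it is $\sigma$-proper: writing $\mathcal{M} = \bigcup_\eta C_\eta$ with each $P|_{C_\eta}$ proper, Sard--Smale applied to each proper piece $P|_{C_\eta}$ shows that its set of critical values is closed with empty interior, and the union over $\eta\in\mathbb N$ of these closed nowhere-dense sets is the complement of $\Theta$ in $V$. Thus $V\setminus \Theta$ is a countable union of closed sets with empty interior, proving that $\Theta$ is residual. The only nontrivial point—and the one where $\sigma$-properness is essential—is that without properness one only controls critical values locally on proper pieces; the decomposition in (iii) is exactly what lets one conclude a global residuality statement rather than a merely local one.
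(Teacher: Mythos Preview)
Your sketch is the standard Sard--Smale argument and is essentially correct, but there is nothing to compare it against: the paper does not prove Theorem~\ref{tran}. It is quoted as a known abstract transversality theorem, with attribution to Quinn, Saut--Temam, and Uhlenbeck, and is then applied as a black box in the proof of Theorem~\ref{main}. So your proposal supplies a proof where the paper gives none; the three-step outline (regular-value/implicit-function theorem to get $F^{-1}(0)$ as a $C^\alpha$ submanifold, identification of $\ker$ and $\mathrm{coker}$ of $dP$ with those of $F_x$, and Sard--Smale on each proper piece $C_\eta$) is exactly the argument found in the cited references.
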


\medskip

\begin{proof}[Proof of Theorem \ref{main}]

$ $

\underline{\em Case (i)} We fix  $\kappa\in\mathfrak P_c$.
We are going to apply the transversality theorem \ref{tran} to the map $F$ defined by
$$F:(0,1)\times \mathfrak{P}^+  \to\rr,\quad F(r,f ):=\partial _t H_{f ,\kappa}(t,r)_{|_{t=r}}-{1\over2}.
$$

In this case we have
$X=Z=\rr ,$ $Y= \mathfrak P,$ $U=(0,1)\subset\rr $ and $V=\mathfrak{B}(f_0,\rho)\subset  \mathfrak P,$
where   $\rho$ is small enough.
By Lemma \ref{regH} and by standard regularity theory, we easily deduce that $F$ is a $C^2-$map.
Since $X=Z$ is a finite dimensional space, it is easy to check that for any $f\in \mathfrak{B}(f_0,\rho)$ the map $r\to F(r,r)$ is a Fredholm map of index $0$ and then assumption i) holds.
  As far as it concerns assumption  iii), we have that
$$F^{-1}(0)= \cup_{\eta=1}^{+\infty} C_\eta,\ \hbox{where}\ C_\eta:=\left\{ \[{1\over\eta},1-{1\over\eta}\]\times \overline  {\mathfrak{B}   \(f_0,\rho-{1\over \eta} \)} \right\}\cap F^{-1}(0) .$$
We can show  that the restriction
$\pi\circ i_{|_{C_\eta}}$ is proper, namely if the sequence $(f_n)\subset \overline  {\mathfrak{B}  \(f_0,\rho-{1\over \eta} \) }$ converges to $f$ and the sequence $(r_n)\subset  \[{1\over\eta},1-{1\over\eta}\]$ is such that
$F(r_n,f_n)=0$ then there exists a subsequence of $(r_n)$ which converges to $r\in \[{1\over\eta},1-{1\over\eta}\] $ and $F(r ,f)=0.$

Once assumption ii) is proved,   we can apply the transversality theorem \ref{tran} and we get that
the set
\begin{align*}
 \mathfrak{A}:=&\left\{f\in \mathfrak{B}(f_0,\rho) \ :\ D_rF(r,f)\not=0\ \hbox{  at any point}\ (r,f)\  \hbox{such that}\ F(r,f)=0\right\}\nonumber\\
=&\left\{f\in \mathfrak{B}(f_0,\rho) \ :\  \hbox{the critical points of $\mathcal V_{f,\kappa}$   are nondegenerate}
  \right\}\end{align*}
is a residual, and hence dense, subset of $\mathfrak{B}(f_0,\rho).$

{\em Proof of assumption ii)}

Let us fix $(\bar r,\bar f)\in (0,1)\times \mathfrak{B}(f_0,\rho)$ such that
\begin{equation}\label{f1}
F(\bar r,\bar f)=0,\ \hbox{i.e.}\ \partial_r H_{\bar f,\kappa}(\bar r,\bar r)={1\over2}.
\end{equation}
We have to prove that the map $D_fF(\bar r,\bar f):\mathfrak{P} \to\rr $  defined by
 $\theta\to D_f  \partial _t H_{f,\kappa}(t,\bar r)_{|_{t=\bar r\atop f=\bar f}}[\theta] $ is surjective, namely
    there exists $\theta\in\mathfrak{P}$ such that
$$D_fF(\bar r,\bar f)[\theta]=D_f  \partial _t H_{f,\kappa}(t,\bar r)_{|_{t=\bar r\atop f=\bar f}}[\theta]\not=0.$$
 We point out that
 $$D_f  \partial _t H_{f,\kappa}(t,\bar r)_{|_{t=\bar r\atop f=\bar f}}[\theta]=\partial _t D_f   H_{f,\kappa}(t,\bar r)[\theta ] _{|_{t=\bar r\atop f=\bar f}}$$
 Therefore, if we set $w_\theta(t):=D_f   H_ {f,\kappa}(t,\bar r)[\theta ](t)$ we have to prove that
 \begin{equation}
 \label{w0}
 \hbox{there exists $\theta\in\mathfrak{P}$ such that
 $w'_\theta (\bar r)\not=0.$}
 \end{equation}
 It is easy to check that the function $w_\theta$ solves the linear problem
 \begin{equation}\label{w1}
 \left\{
 \begin{aligned}
 &-w^{''}_\theta-n{\bar f'\over\bar f}w'_\theta+\kappa(t)w_\theta=n\left({\theta\over\bar f}\right)'\partial_t H_{\bar f,\kappa}(t,\bar r)+\kappa(t)\gamma(t),\quad t\in (0,1),\\
 &w_\theta(0)=w_\theta(1)-n\bar f^{n-1}(\bar r)\theta(\bar r)\int\limits_{\bar r}^1{1\over \bar f^n(s)}ds+n\bar f^{n }(\bar r)\int\limits_{\bar r}^1{\theta(s)\over \bar f^{n+1}(s)}ds\\
 &w'_\theta(0)=w'_\theta(1)-n{\bar f^{n-1}(\bar r)\over\bar f^{n }(1) }\theta(\bar r) +n{\bar f^{n }(\bar r)\over\bar f^{n+1}(1)} \theta(1). \\
 \end{aligned}
 \right.
 \end{equation}
where
$$\gamma(t):=0\ \hbox{ if $t<\bar r$ and}\ \gamma(t):=n\bar f^{n-1}(\bar r)\theta(\bar r)\int\limits_{\bar r}^t{1\over \bar f^n(s)}ds-
n\bar f^{n }(\bar r)\int\limits_{\bar r}^t{\theta(s)\over \bar f^{n+1}(s)}ds\ \hbox{ if $t\ge\bar r$.}$$
  Let $\chi\in C^2(\rr)$ be an even function such that $\chi'(r)\le 0$ if $r\ge0,$ $\chi(r)=0$ if $|r|\ge1$ and $\chi(0)=\max\limits_{ \rr} \chi=1.$
  For $\epsilon>0$ small we set
$\theta_\epsilon(t):=\chi_\epsilon(t)\bar f(t), $ with $
 \chi_\epsilon(t):=\chi\({t-\bar r \over\epsilon}\).$
  Therefore,  using the definition of $\Gamma$ in \eqref{gamma}, the function $w_\epsilon:=w_{\theta_\epsilon}$ solves \eqref{w1} which reads as
$$
 \left\{
 \begin{aligned}
 &-w^{''}_\epsilon-n{\bar f'\over\bar f}w'_\epsilon+\kappa(t)w_\epsilon=n \chi_\epsilon'(t)\partial_t H_{\bar f,\kappa}(t,\bar r)-n\kappa(t)\Gamma (t,\bar r)+\rho_\epsilon(t) ,\quad t\in (0,1),\\
 &w_\epsilon(0)= w_\epsilon(1)-n\bar f^{n}(\bar r)  \int\limits_{\bar r}^{\bar 1}{1\over \bar f^n(s)}ds+o(1)\\
 &w'_\epsilon(0)= w'_\epsilon(1)-n{\bar f^{n}(\bar r)\over\bar f^{n }(1) }+o(1), \\
 \end{aligned}
 \right.
$$
 where $\rho_\epsilon(t)\to0$ $C^0-$uniformly in $[0,1].$
 Then $w_\epsilon:=\tilde w_\epsilon+\hat w_\epsilon$  where $\tilde w_\epsilon$ solves
 \begin{equation}\label{w4}
 \left\{
 \begin{aligned}
 &-\tilde w^{''}_\epsilon-n{\bar f'\over\bar f}\tilde w'_\epsilon+\kappa(t)\tilde w_\epsilon=n \chi_\epsilon'(t)\partial_tH_{\bar f,\kappa}(t,\bar r) ,\quad t\in (0,1),\\
 &\tilde w_\epsilon(0)=\tilde w_\epsilon(1)  \\
 &\tilde w'_\epsilon(0)=\tilde w'_\epsilon(1)  \\
 \end{aligned}
 \right.
 \end{equation}
 and
 $\hat w_\epsilon$ solves
 \begin{equation}\label{w5}
 \left\{
 \begin{aligned}
 &-\hat w^{''}_\epsilon-n{\bar f'\over\bar f}\hat w  '_\epsilon+\kappa(t)\hat w _\epsilon=-n\kappa(t)\Gamma(t,\bar r) +\rho_\epsilon(t),\quad t\in (0,1),\\
 &\hat w _\epsilon(0)=\hat w_\epsilon(1)
 -n\bar f^{n}(\bar r)  \int\limits_{\bar r}^{\bar 1}{1\over \bar f^n(s)}ds  +o(1) \\
 &\hat w'_\epsilon(0)=\hat w'_\epsilon(1) -n{\bar f^{n}(\bar r)\over\bar f^{n }(1) }+o(1) \\
 \end{aligned}
 \right.
 \end{equation}
 Then, by \eqref{w4} using the definition of the Green's function $G$ given in \eqref{g1} we get
 $$\tilde w  _\epsilon(t)=\int\limits_0^1  n \chi_\epsilon'(s)\partial_tH_{\bar f,\kappa}(s,\bar r)  G (t,s)ds$$
 and so using the decomposition of $G $ given in \eqref{deco} we get
\begin{align}\label{v1}
&\tilde w'_\epsilon(\bar r)=\int\limits_0^1 n \chi_\epsilon'(s)\partial_tH_{\bar f,\kappa}(s,\bar r) \partial_r G  (\bar r,s)ds=\int\limits_{\bar r-\epsilon}^{\bar r+\epsilon} n {1\over\epsilon}\chi '\({s-\bar r\over\epsilon}\)\partial_tH_{\bar f,\kappa}(s,\bar r) \partial_r G (\bar r,s)ds\nonumber\\
&=\int\limits_{-1}^{1}  n {1\over\epsilon}\chi '\({\sigma}\)\partial_tH_{\bar f,\kappa}(\epsilon\sigma+\bar r,\bar r) \partial_r G (\bar r,\epsilon\sigma+\bar r)d\sigma\nonumber\\
&=\int\limits_{-1}^{0}  n  \chi '\({\sigma}\) \partial_tH_{\bar f,\kappa}(\epsilon\sigma+\bar r,\bar r)\partial_r H_{\bar f,\kappa}(\bar r,\epsilon\sigma+\bar r)  d\sigma\nonumber\\ &+ \int\limits_{0}^{1}   n  \chi '\({\sigma}\)\partial_tH_{\bar f,\kappa}(\epsilon\sigma+\bar r,\bar r) \(\partial_r H_{\bar f,\kappa}(\bar r,\epsilon\sigma+\bar r)-{\bar f^n(\epsilon\sigma+\bar r)\over f^n(\bar r)}\)d\sigma={n\over2}+o(1),
\end{align}
 because of \eqref{f1}.
Moreover, by \eqref{w5} using the standard regularity theory and the definition of $H_{\bar f,\kappa}$ given in \eqref{hn}, we immediately deduce  that $\hat w_\epsilon(r)\to nH_{\bar f}(r,\bar r)$ $C^1-$uniformly  in $[0,1].$ In particular,
\begin{equation}\label{z1}
\hat w'_\epsilon(\bar r)=n\partial_r H_{\bar f,\kappa}(\bar r,\bar r)+o(1)={n\over 2}+o(1),
 \end{equation}
 because of \eqref{f1}.
 Finally, by \eqref{v1} and \eqref{z1} we immediately deduce that
 $w'_\epsilon(\bar r)\not=0$ provided $\epsilon$ is small enough, which proves \eqref{w0}.

 \medskip
 $ $

\underline{\em Case (ii)}
We fix  $f\in\mathfrak P^+$.
We are going to apply the transversality theorem \ref{tran} to the map $K$ defined by
$$K:(0,1)\times \mathfrak{P}_c\to\rr,\quad K(r,\kappa):=\partial _t H_{f,\kappa }(t,r)_{|_{t=r}}-{1\over2}.
$$
We argue exactly as in the previous case.
Once  assumption ii) is proved,   we can apply the transversality theorem \ref{tran} and we get that
the set
\begin{align*}
 \mathfrak{A}:=&\left\{\kappa\in \mathfrak{B}(\kappa_0,\rho) \ :\ D_rK(r,\kappa)\not=0\ \hbox{  at any point}\ (r,\kappa)\  \hbox{such that}\ K(r,\kappa)=0\right\}\nonumber\\
=&\left\{\kappa\in \mathfrak{B}(\kappa_0,\rho) \ :\  \hbox{the critical points of $\mathcal V_{f,\kappa}$   are nondegenerate}
  \right\}\end{align*}
is a residual, and hence dense, subset of $\mathfrak{B}(\kappa_0,\rho).$

{\em Proof of assumption ii)}

Let us fix $(\bar r,\bar \kappa)\in (0,1)\times \mathfrak{B}(\kappa_0,\rho)$ such that
\begin{equation}\label{k1}
K(\bar r,\bar \kappa)=0,\ \hbox{i.e.}\ \partial_r H_{f,\bar \kappa}(\bar r,\bar r)={1\over2}.
\end{equation}
We have to prove that the map $D_\kappa K(\bar r,\bar \kappa):\mathfrak{P} \to\rr $  defined by
 $\theta\to D_\kappa  \partial _t H_{f,\kappa}(t,\bar r)_{|_{t=\bar r\atop \kappa=\bar\kappa}}[\theta] $ is surjective, namely
    there exists $\theta\in\mathfrak{P}$ such that
$$D_\kappa K(\bar r,\bar \kappa)[\theta]=D_\kappa  \partial _t H_{f,\kappa}(t,\bar r)_{|_{t=\bar r\atop \kappa=\bar \kappa}}[\theta]\not=0.$$
 We point out that
 $$D_\kappa  \partial _t H_{f,\kappa}(t,\bar r)_{|_{t=\bar r\atop \kappa=\bar \kappa}}[\theta]=\partial _t D_\kappa  H_{f,\kappa}(t,\bar r)[\theta ] _{|_{t=\bar r\atop \kappa=\bar\kappa}}$$
 Therefore, if we set $z_\theta(t):=D_\kappa   H_{f,\kappa}(t,\bar r)[\theta ](t)$ we have to prove that
 \begin{equation}
 \label{z0}
 \hbox{there exists $\theta\in\mathfrak{P}$ such that
 $z'_\theta (\bar r)\not=0.$}
 \end{equation}
 It is easy to check that the function $z_\theta$ solves the linear problem
 \begin{equation}\label{t1}
 \left\{
 \begin{aligned}
 &-z^{''}_\theta-n{ f'\over  f}z'_\theta+\bar \kappa(t)z_\theta=- \[H_{f,\bar \kappa}(t,\bar r)+\Gamma(t,\bar r)\]\theta ,\quad t\in (0,1),\\
 &z_\theta(0)=z_\theta(1),\  z'_\theta(0)=z'_\theta(1). \\
 \end{aligned}
 \right.
 \end{equation}
  Let $\zeta\in C^2(\rr)$ be a positive even function such that  $\zeta(r)=0$ if $|r|\ge1 .$
  For $\epsilon>0$ small we choose in \eqref{t1}
$\theta_\epsilon(t):= {1\over\epsilon}\zeta\({t-\bar r \over\epsilon}\). $
 Then,  using the definition of the Green's function $G $ given in \eqref{g1} we get
 $$z  _\epsilon(t)=-\int\limits_0^1   \theta_\epsilon (s) H_{f,\bar \kappa}(s,\bar r)  G (t,s)ds$$
 and so using the decomposition of $G $ given in \eqref{deco} we get
\begin{align*}
&z'_\epsilon(\bar r)=-\int\limits_0^1   \theta_\epsilon(s) \[H_{f,\bar \kappa}(s,\bar r) +\Gamma(s,\bar r)\]\partial_r G   (\bar r,s)ds\nonumber \\ & =-\int\limits_{-\bar r/\epsilon}^{(1-\bar r)/\epsilon}  \zeta({\sigma}) \[H_{f,\bar\kappa}(\epsilon\sigma+\bar r,\bar r) +\Gamma(\epsilon\sigma+\bar r,\bar r)\]\partial_r G (\bar r,\epsilon\sigma+\bar r)d\sigma\nonumber\\
&=-\int\limits_{-1}^{0}     \zeta\({\sigma}\)  H_{f,\bar \kappa}(\epsilon\sigma+\bar r,\bar r)\partial_r H_{f,\bar \kappa}(\bar r,\epsilon\sigma+\bar r)  d\sigma\nonumber\\ &- \int\limits_{0}^{1}    \zeta\({\sigma}\) \[H_{f,\bar \kappa}(\epsilon\sigma+\bar r,\bar r) -f^n(\bar r)\int\limits^{\epsilon\sigma+\bar r}_{\bar r}{1\over f^n(t)}dt\]\(\partial_r H_{f,\bar\kappa}(\bar r,\epsilon\sigma+\bar r)-{f^n(\epsilon\sigma+\bar r)\over f^n(\bar r)}\)d\sigma\nonumber \\
& =-{ 1\over2}\int\limits_0^1\zeta(\sigma)d\sigma +o(1),
\end{align*}
 because of \eqref{k1}.
 Then  $z'_\epsilon(\bar r)\not=0$ if $\epsilon$ is small enough, which proves \eqref{z0}.
 \end{proof}

\section{Proof of main results}\label{theproof}
Let us consider the limit problem
$$-U^{''}=e^{U}\ \hbox{in}\ \rr,
$$
whose solutions are
$$
U_{\epsilon,s}(r):=\ln{4\over\epsilon^2}{e^{\sqrt2{r-s\over\epsilon}}\over\(1+e^{\sqrt2{r-s\over\epsilon}}\)^2},\
r,s\in\rr,\ \epsilon>0.$$
Let $PU_{\epsilon,s}$ be the projection of $U_{\epsilon,s}$ on the interval $[0,1]$ with periodic boundary conditions,  namely $PU_{\epsilon,s}$ solves
the problem
\begin{equation}\label{proU}
\left\{\begin{aligned}
&-PU_{\epsilon,s}^{''}-n{f'(r)\over f(r)}PU_{\epsilon,s}'+\kappa(r)PU_{\epsilon,s}=e^{U_{\epsilon,s_\epsilon}(r)},\ r\in[0,1]\\
&PU_{\epsilon,s}(0)=PU_{\epsilon,s}(1),\quad PU_{\epsilon,s}'(0)=PU_{\epsilon,s}'(1),
\end{aligned}\right.\end{equation}

\begin{proof}[Proof of Theorem \ref{mainle}]
We   argue exactly as in \cite{G1}.
We look for a solution to problem \eqref{pp} as
$$v_p:=\rho\(w_\epsilon+\epsilon z_{1,\epsilon}+\epsilon^2 z_{2,\epsilon}\)+\phi_\epsilon$$
where
 $\rho=\rho(p)\to0$ as $p\to\infty.$
Here the first order term   $w_\epsilon=PU_{\epsilon,s_\epsilon}$ is defined in \eqref{proU}, 
the concentration point is $s_\epsilon:=r_0+ \epsilon\sigma+\epsilon^2\tau $ and the real numbers $\sigma$ and $\tau$ are chosen so that it is possible to find the higher order terms  $z_{1,\epsilon}$ and   $z_{2,\epsilon}$ as  solutions to  two different linear problems
\begin{equation}\label{zetal}
\left\{\begin{aligned}
&-z_{i,\epsilon}^{''}-n{f'(r)\over f(r)}z_{i,\epsilon}'+\kappa(r)z_{i,\epsilon}=e^{U_{\epsilon,s_\epsilon}(r)}h _{i,\epsilon},\ r\in[0,1]\\
&z_{i,\epsilon}(0)=z_{i,\epsilon}(1),\quad z_{i,\epsilon}'(0)=z_{i,\epsilon}'(1).
\end{aligned}\right.\end{equation}
for some suitable functions $h_{i,\epsilon} $ (see (4.9) and (4.23) in \cite{G1}). The functions $z_{1,\epsilon}$ and $z_{2,\epsilon}$ are built exactly as in Lemma 4.2 and Lemma 4.5 of \cite{G1}, respectively.
 The remainder term $\phi_\epsilon$ is found using a contraction mapping argument as in Section 7 of \cite{G2},
 once the parameter $\epsilon=\epsilon(p)\to0$ and $\rho=\rho(p)\to0$ as $p\to\infty$ are chosen in an appropriate way as in (5.7) and (5.8) of \cite{G1} (i.e. ${2\sqrt2\over\epsilon }H(r_0,r_0)\sim p $ and $\rho\sim{1\over p}$).

\end{proof}

\begin{proof}[Proof of Theorem \ref{mainmf}]
We   argue exactly as in \cite{G2}.
We look for a solution to problem \eqref{pl} as
$$v_\lambda:=w_\epsilon+\epsilon z_{1,\epsilon}+\epsilon^2 z_{2,\epsilon}+\phi_\epsilon$$
where the first order term    $w_\epsilon=PU_{\epsilon,s_\epsilon}$ is defined in \eqref{proU}, 
the concentration point is $s_\epsilon:=r_0+ \epsilon\sigma+\epsilon^2\tau $ and the real numbers $\sigma$ and $\tau$ are chosen so that it is possible to find the higher order terms  $z_{1,\epsilon}$ and   $z_{2,\epsilon}$ as  solutions to  two different linear problems like \eqref{zetal}
for some suitable functions $h_{i,\epsilon} $ (see (2.20) and (2.26) in \cite{G2}). The functions $z_{1,\epsilon}$ and $z_{2,\epsilon}$ are built exactly as in Lemma 2.4 and Lemma 2.6 of \cite{G2}, respectively.
 The remainder term $\phi_\epsilon$ is found using a contraction mapping argument as in Section 4 of \cite{G2},
 once the parameter $\epsilon=\epsilon(\lambda)\to0$ as $\lambda\to0$ is chosen in an appropriate way as in (3.7) of \cite{G2} (i.e. ${4\over\epsilon^2}\sim\lambda e^{{2\sqrt2\over\epsilon}H(r_0,r_0)} $).

\end{proof}

\end{document}